\newtheorem{theorem}{Theorem}[section]
\newtheorem{lemma}[theorem]{Lemma}
\newtheorem{fact}{Fact}
\begin{document}
\textwidth 150mm \textheight 225mm
\title{Spectral radius and size conditions for fractional $(a,b,m)$-covered graphs
\thanks{Supported by the National Natural Science Foundation of China (Nos. 12271439 and 12001434)}}
\author{Zengzhao Xu$^{a,b}$, Ligong Wang$^{a,b}$\footnote{Corresponding author.}, Weige Xi$^{c}$\\
{\small $^{a}$ School of Mathematics and Statistics, Northwestern Polytechnical University,}\\
{\small  Xi'an, Shaanxi 710129, P.R. China.}\\
{\small $^b$ Xi'an-Budapest Joint Research Center for Combinatorics, Northwestern
	Polytechnical University,}\\
{\small $^{c}$ College of Science, Northwest A\&F University, Yangling, Shaanxi 712100, China}\\
{\small E-mail: xuzz0130@163.com; lgwangmath@163.com; xiyanxwg@163.com}\\}
\date{}
\maketitle
\begin{center}
\begin{minipage}{120mm}
\vskip 0.3cm
\begin{center}
{\small {\bf Abstract}}
\end{center}
{\small  A fractional $(a,b,m)$-covered graph is a generalization of the concept of a fractional $[a,b]$-covered graph. For any $H \subseteq G$ with edge set $|E(H)| = m$, if there exists a fractional $[a,b]$-factor (the corresponding fractional indicator function is $h$) such that $h(e) = 1$ for any $e \in H$, then the graph $G$ is called a fractional $(a,b,m)$-covered graph. In this paper, we characterize the conditions for a graph to be a fractional $(a,b,m)$-covered graph from the perspectives of spectral radius and size, respectively. 

\vskip 0.1in \noindent {\bf Key Words}: \ Fractional $(a,b,m)$-covered, Spectral radius, Size, Fractional $[a,b]$-factor. \vskip
0.1in \noindent {\bf AMS Subject Classification (2020)}: \ 05C50}
\end{minipage}
\end{center}

\section{Introduction }

 Graph-theoretic models can be used to efficiently model many real-world network systems. In an urban transportation network, different areas and the roads connecting them can be abstracted as vertices and edges in a graph, respectively.  Graphs and their factorization are widely used in network design, combinatorial design and other fields, thus attracting great academic attention. 
 
 Fractional factors play an important role in the fields of graph theory and combinatorial optimization. The fractional factor characterizes the existence of fractional flows in a graph, thereby representing the feasibility of data flow transmission in a network. The packet allocation problem in data transmission networks can be modeled as a fractional flow problem, which can be further transformed into a fractional factor existence problem. Therefore, the study of graph fractional factors is of great importance.
 
 Throughout this paper, we only consider finite, simple and undirected graphs. For a graph $G$, we use $V(G)$ and $E(G)$ to denote its vertex set and edge set, respectively. The number of edges in $G$ is denoted by $e(G)$. Let $ N_G(x) = \{y: xy \in E(G)\} $ and $ d(x) = |N(x)| $ denote the neighborhood and the degree of vertex $ x \in V(G) $, respectively. For any \( T \subseteq V(G) \), we denote \( G - T \) as the subgraphs induced by  $V(G) \setminus T$. For a subset \( M \subseteq E(G) \), the edge-deletion subgraph of \( G \) obtained by deleting all edges of \( M \) is denoted by \( G \setminus M \). Let \( N_T(x) = N_G(x)\cap T \) and \( d_T(x) = |N_T(x)| \). For two disjoint vertex subsets \( S_1, S_2 \subseteq V(G) \), let \( E_G(S_1, S_2) \) denote the set of edges in graph \( G \) with one endpoint in \( S_1 \) and the other in \( S_2 \), and let \( e_G(S_1, S_2) = |E_G(S_1, S_2)| \). For any $H \subseteq G$ with edge set $|E(H)| = m$, let $ e_H(S_1, S_2) = | \{ e = xy \in E(H) : x \in S_1, y \in S_2 \} |$. The minimum degree of $G$ is denoted by $\delta(G)$. Let $G_1 + G_2$ denote the disjoint union of graphs $G_1$ and $G_2$. The join of $G_1$ and $G_2$, denoted by $G_1 \lor G_2$, is derived from $G_1 + G_2$ by joining each vertex in $V(G_1)$ to each vertex of $V(G_2)$. For more details and concepts, readers may refer to \cite{BM}.

For a graph $G$, the adjacency matrix of $G$, denoted by $A(G) = (a_{ij})_{n \times n}$, is a matrix where $a_{ij} = 1 $ if vertex $v_i$ is adjacent to vertex $v_j$. Otherwise, $a_{ij}=0$. The largest eigenvalue
of $A(G)$ is called the spectral radius of $G$, denoted by $\rho(G)$.

For a graph $G$, consider two nonnegative integer-valued functions $g$ and $f$ defined on $V(G)$ with $g(u) \leq f(u)$ for all $u \in V(G)$. Let $ h : E(G) \to [0, 1] $ be a function defined on $E(G)$ of a graph $ G $. Let $E_G(u)$ denotes the set 
of edges incident with $u$ in $G$ and $E_h = \{e \in E(G) : h(e) > 0\}$. If $g(u) \leq \sum_{e\in E_G(u)} h(e) \leq f(u)$ holds for every 
$u \in V(G)$, then the subgraph of $G$ with $E_h$, denoted by $G[E_h]$, 
is called a fractional $(g,f)$-factor of $G$ with indicator function $h$. If $g(u) = a$ and $f(u) = b$ 
for every vertex $u \in V(G)$, then a fractional $(g,f)$-factor of $G$ is called a fractional $[a,b]$-factor of $G$. For more research on the fractional $(g, f)$-factor of graphs, readers may refer to \cite{FLLO,FLA,HLZZ,LFZ,LLGX,LMZ,LLA,MW,O,O2,WZ}).

The concept of a fractional covered graph, which dates back many years, requires that certain edges must be included in the corresponding fractional factor. Yang and Kang \cite{YK} first proposed the concept of a fractional $(g, f)$-covered graph, which was subsequently revised and refined by Li et al \cite{LYZ}. A graph $G$ is called a fractional $(g, f)$-covered graph if for every edge $e$ of $G$, there is a fractional $(g, f)$-factor with the indicator function $h$ such that $h(e) = 1$. If $g(u)=a$ and $f(u)=b$ with $1\le a\le b$ for each $u\in V(G)$, then a fractional $(g, f)$-covered graph is called a fractional $[a, b]$-covered graph. In particular, if $a=b=k$, then a fractional $[k,k]$-covered graph is called a fractional $k$-covered graph. In recent years, many scholars have begun to pay attention to fractional covered graphs. Zhou \cite{Z} gave a degree condition for a graph to be a fractional
$k$-covered graph. Yuan and Hao \cite{YH1} provided a degree condition that ensure a graph is a fractional $[a,b]$-covered graph. Later,  Yuan and Hao \cite{YH2} obtains some sufficient conditions based on the neighborhood union and binding number for a graph $G$ to be fractional $[a,b]$-covered. Wang et. al \cite{WZC} provided
a tight spectral radius condition for graphs to be fractional $[a, b]$-covered. However, the previous studies only discusses the case of covering a single edge.

\begin{theorem}(\cite{WZC})\label{T6}
	Let $1\le a \le b $ be integers, and let $ G $ be a graph of order $ n \geq 4 + \sqrt{32a^2 + 24a + 5} $. Let $H_{n,a}=K_{a-1}\lor(K_1+K_{n-a})$.
	
	\begin{enumerate}
		\item[(i)] For \( b \neq 1 \), if \( \rho(G) \geq \rho(H_{n,a}) \), then \( G \) is a fractional \([a, b]\)-covered graph unless \( G \cong H_{n,a} \).
		
		\item[(ii)] For \( b = 1 \), if \( \rho(G) \geq \rho(H_{n,3}) \), then \( G \) is a fractional \([1, 1]\)-covered graph unless \( G \cong H_{n,3} \).
	\end{enumerate}
\end{theorem}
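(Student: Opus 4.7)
My plan is to argue by contradiction, using a Tutte-type characterization of fractional $[a,b]$-covered graphs (as developed in \cite{LYZ} and refined in \cite{YH1,YH2}): $G$ is fractional $[a,b]$-covered if and only if, for every edge $e_0 = xy \in E(G)$ and every $S \subseteq V(G)$,
$$b|S| + \sum_{v \in T} d_{G-S}(v) - a|T| \ge \eta_{e_0}(S, T),$$
where $T = \{v \in V(G)\setminus S : d_{G-S}(v) \le a-1\}$ and $\eta_{e_0}(S, T) \in \{0, 1, 2\}$ records how many endpoints of $e_0$ lie in $T$. Assuming $G \not\cong H_{n,a}$ satisfies $\rho(G) \ge \rho(H_{n,a})$ yet fails to be fractional $[a,b]$-covered, I would pick an offending triple $(e_0, S, T)$ violating this inequality and proceed to derive a structural bound.

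From the defect, one reads off edge bounds on $G$: after controlling the low-degree contribution via $\sum_{v\in T} d_{G-S}(v) \le a|T| - b|S| - 1 + \eta_{e_0}$, I would show that $G$ is a spanning subgraph of $G^*(s,t) := K_s \lor (K_t + K_{n-s-t})$ for $s=|S|$, $t=|T|$, modulo edges whose addition cannot decrease the spectral radius. By Perron--Frobenius monotonicity,
$$\rho(G) \le \rho\bigl(G^*(s, t)\bigr),$$
and $\rho(G^*(s,t))$ is the largest root of the characteristic polynomial of the equitable-partition quotient matrix
$$B(s, t) = \begin{pmatrix} s - 1 & t & n - s - t \\ s & t - 1 & 0 \\ s & 0 & n - s - t - 1 \end{pmatrix},$$
which is a cubic in $x$.

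The crux is then a comparison: I would show that over all admissible pairs $(s,t)$ forced by the violating inequality (roughly $1 \le s \le a$ and $t \ge 1$, with small perimeter $s+t$), the quantity $\rho(G^*(s,t))$ is maximized uniquely at $(s,t) = (a-1, 1)$, corresponding to $H_{n,a}$. The size hypothesis $n \ge 4 + \sqrt{32a^2 + 24a + 5}$ enters precisely here: substituting into $\det(xI - B(s,t)) = 0$ and evaluating the difference against the cubic for $H_{n,a}$ yields a polynomial in $n$ whose positivity is equivalent to this quadratic-in-$a$ lower bound on $n$. This would give $\rho(G) < \rho(H_{n,a})$ whenever $(s,t) \ne (a-1,1)$, contradicting the hypothesis; the equality case would force $G \cong H_{n,a}$.

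The main obstacle is the monotonicity analysis of the dominant cubic root in the parameters $s$ and $t$, since the cubic does not factor cleanly. I would use derivative estimates via the implicit function theorem applied to $\det(xI - B(s,t)) = 0$, anchored by the exact evaluation at $(a-1,1)$, to show $\partial\rho/\partial t < 0$ for $t \ge 1$ and $\partial\rho/\partial s < 0$ for $s \ge a$ under the assumed lower bound on $n$. For part (ii) with $b=1$ (hence $a=1$), the characterization degenerates ($T$ becomes the set of isolated vertices of $G-S$, and the defect-one case of the Tutte condition sits on a different boundary), so the extremal is $H_{n,3}$ rather than the disconnected $H_{n,1}$; I would treat this case separately by a direct analysis of fractional perfect matchings, computing $\rho(H_{n,3})$ from its own $3\times 3$ quotient matrix and comparing against competing structures.
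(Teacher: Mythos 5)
This statement is quoted from \cite{WZC} and the paper gives no proof of it, so there is no in-paper argument to compare against; I can only assess your proposal on its own terms. Your skeleton --- contradiction, a Tutte--type characterization of fractional $[a,b]$-covered graphs, and a spectral comparison with a join-type extremal graph --- is the standard template for results of this kind and is surely in the spirit of \cite{WZC}. But as written it has a concrete gap: the containment $G \subseteq K_s \lor (K_t + K_{n-s-t})$ is false in general. The graph $K_s \lor (K_t + K_{n-s-t})$ has \emph{no} edges between the $T$-part and $V(G)\setminus(S\cup T)$, whereas $G$ may well have such edges; the defect inequality only bounds $\sum_{v\in T} d_{G-S}(v)$, i.e.\ it limits \emph{how many} such edges there are, it does not eliminate them. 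So the Perron--Frobenius monotonicity step does not apply to the supergraph you name. The two standard repairs are: (i) convert the defect inequality into an upper bound on $e(G)$ of the form $e(G)\le 2 - 1 + at - bs + st + \binom{n-t}{2}$ and then apply an edge-count spectral bound such as Hong--Shu--Fang (this is exactly what the present paper does in its own Theorems \ref{T1} and \ref{T2}); or (ii) extract a single vertex $v\in T$ with $d_G(v)\le s+a-1$ and use the genuine containment $G\subseteq K_{d_G(v)}\lor(K_1+K_{n-d_G(v)-1})$, reducing to a one-parameter comparison anchored at $H_{n,a}$.

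Two further points. Your restriction of the admissible pairs to ``$1\le s\le a$ and $t\ge 1$ with small perimeter'' is unjustified: nothing a priori bounds $|S|$ for a violating pair, and handling large $s$ and $t$ (typically by splitting into ranges of $t$ relative to $n/2$, as this paper does) is where most of the work and the hypothesis $n\ge 4+\sqrt{32a^2+24a+5}$ actually live, not only in the final root comparison. And part (ii) ($a=b=1$, extremal graph $H_{n,3}$) is left as a placeholder; since the extremal structure genuinely changes there, ``treat it separately'' is not yet an argument.
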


For any $H \subseteq G$ with edge set $|E(H)| = m$, if there exists a fractional $[a,b]$-factor (the corresponding fractional indicator function is $h$) such that $h(e) = 1$ for any $e \in H$, then the graph $G$ is called a fractional $(a,b,m)$-covered graph. In particular, if $a=b=k$, then a fractional $(k,k,m)$-covered graph is called a fractional $(k,m)$-covered graph. Liu \cite{L} generalized the notion of fractional $(g, f )$-covered graphs by introducing the fractional $(g, f , m)$-covered graph. However, Gao and Wang \cite{GW} identified issues in Liu's work \cite{L} and they provided the revised necessary and sufficient condition that characterizes a fractional $(g, f, m)$-covered graph. Gao et. al \cite{GWC} presented the bounds of the isolated toughness and its variants for fractional $(a, b, m)$-covered graphs, and illustrates that the bounds were tight by counterexamples.

The spectral radius of a graph reflects many key properties of the graph. Thus, it is meaningful to study fractional covered graphs from the perspectives of spectral radius and size. In this paper, we characterize the conditions for a graph to be a fractional $(a,b,m)$-covered graph from the perspectives of spectral radius and size, respectively. Our main results are presented as follows.

\noindent\begin{theorem}\label{T1}  \  For integers $2\le a\le b$ and $1\le m\le b$, let $G$ be a connected graph of order $n\ge\frac{1}{2}[4b+2a+ab+(b+8)m+16]$ with minimum degree $\delta(G)\ge a+m$. If
	$$\rho(G)\ge n-b-1,$$
	then $G$ is a fractional $(a,b,m)$-covered graph. 
\end{theorem}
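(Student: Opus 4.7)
The plan is to argue by contradiction. Suppose $G$ is not a fractional $(a,b,m)$-covered graph. Then there exists $H \subseteq G$ with $|E(H)|=m$ such that no fractional $[a,b]$-factor $h$ of $G$ satisfies $h(e)=1$ for every $e \in E(H)$. I would invoke the necessary and sufficient condition of Gao and Wang~\cite{GW} that characterizes fractional $(g,f,m)$-covered graphs. Specialized to $g\equiv a$ and $f\equiv b$, the failure of this criterion gives disjoint $S, T \subseteq V(G)$ with
$$b|S| + \sum_{x\in T}\bigl(d_{G-S}(x)-a\bigr) \;\le\; \varepsilon(S,T,H) - 1,$$
where $\varepsilon(S,T,H)$ is an explicit correction term accounting for edges of $H$ with endpoints in $S\cup T$. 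Together with $\delta(G)\ge a+m$ and the size hypothesis, this inequality pins the parameters $s:=|S|$ and $t:=|T|$ inside a narrow numerical window, which I would extract first.

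Next I would use the standard ``densify around $(S,T)$'' argument: once the obstruction pair is fixed, $G$ is seen to embed as a spanning subgraph of the extremal graph
$$G^{*} \;=\; K_s \,\vee\, \bigl(G_1 + K_{n-s-t}\bigr),$$
where $G_1$ is a graph on the $t$ vertices of $T$ whose maximum degree is constrained by the obstruction inequality and whose edges accommodate the prescribed $H$-edges. Monotonicity of the spectral radius under edge addition gives $\rho(G)\le \rho(G^{*})$, with equality only when $G\cong G^{*}$.

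I would then bound $\rho(G^{*})$ from above via the quotient matrix of the equitable three-part partition $\{S,\,T,\,V(G)\setminus (S\cup T)\}$. The largest root of its characteristic polynomial is monotone in $s$ and $t$, so it suffices to check a handful of extremal configurations; in each of them the size threshold $n\ge\frac12[4b+2a+ab+(b+8)m+16]$ should yield $\rho(G^{*}) < n-b-1$, contradicting the hypothesis $\rho(G)\ge n-b-1$ and closing the argument.

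The main obstacle is the case analysis. When $s$ is small (in particular $s=0$) or when $t$ is at its boundary, the canonical comparison graph $G^{*}$ need not dominate $G$ directly, and one must either refine $G^{*}$ or rule the case out using $\delta(G)\ge a+m$. The correction term $\varepsilon(S,T,H)$ interacts with the minimum-degree hypothesis in a delicate way, since $H$ may have up to $b$ edges and these edges can straddle $S$, $T$ and $V\setminus(S\cup T)$ in different ways; keeping this bookkeeping correct is what forces the precise form of the constant $\frac12[4b+2a+ab+(b+8)m+16]$ in the order assumption, and is where the bulk of the technical work lies.
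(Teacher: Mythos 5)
Your opening matches the paper's: argue by contradiction, invoke the Gao--Wang criterion (the paper's Lemma on fractional $(a,b,m)$-covered graphs) to produce $S$, $T$ and $H$ with $\sum_{v\in T} d_{G-S}(v)-a|T|+b|S|\le \Delta(S,T,H)-1\le 2m-1$ (the bound $\Delta\le 2m$ is the paper's Fact~1), and use $\delta(G)\ge a+m$ to constrain $s=|S|$ and $t=|T|$; the paper's Claims 1--3 give $s\ge m+1$, $t\ge b+1$ and $s\le t+m-1$. From that point on, however, the paper takes a different and more elementary route than the one you sketch: it bounds $e(G)\le 2m-1+at-bs+st+\binom{n-t}{2}$ by splitting the edge set around $T$, feeds this into the Hong--Shu--Fang/Nikiforov inequality $\rho(G)\le \frac{\delta-1}{2}+\sqrt{2e(G)-n\delta+\frac{(\delta+1)^2}{4}}$ with $\delta=a+m$, and then verifies by a two-case analysis in $t$ that the right-hand side is strictly below $n-b-1$. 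No extremal comparison graph is ever constructed, which is consistent with the fact that the threshold $n-b-1$ here is not attained by an extremal example.

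Your proposed spectral step has a genuine gap. The containment $G\subseteq G^{*}=K_s\vee(G_1+K_{n-s-t})$ with $G_1$ supported on $T$ is false in general: the obstruction only says $d_{G-S}(x)\le a-1$ for $x\in T$, and those few edges may well join $T$ to $V(G)\setminus(S\cup T)$, so $T$ cannot be split off as a separate component of $G^{*}-S$. Even after repairing $G^{*}$ to retain the $T$-to-rest edges, the partition $\{S,\,T,\,V(G)\setminus(S\cup T)\}$ is not equitable (vertices of $T$ have non-uniform degrees), so the quotient-matrix eigenvalue bound does not apply as stated; you would have to pass to a further supergraph with uniform $T$-degrees and redo the threshold computation there. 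Finally, your outline never explains how $\delta(G)\ge a+m$ enters the final eigenvalue estimate, whereas in the paper it is essential precisely at that stage (it is the parameter $\delta$ in Hong's bound, and without it the numerology does not reach $n-b-1$). These defects are likely repairable with substantial extra work, but as written the proposal does not constitute a proof.
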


\noindent\begin{theorem}\label{T2}  \  
	\  For integers $2\le a\le b$ and $1\le m\le b$, let $G$ be a connected graph of order $n\ge 4a+\frac{5b}{2}+4m+7$ with minimum degree $\delta(G)\ge a+m$. If
	$$e(G)\ge \binom{n-b-1}{2}+ab+2a+(b+1)m,$$
	then $G$ is a fractional $(a,b,m)$-covered graph. 
\end{theorem}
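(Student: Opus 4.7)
The plan is to argue by contradiction using the necessary-and-sufficient characterization of fractional $(a,b,m)$-covered graphs due to Gao and Wang \cite{GW}. Suppose $G$ satisfies the hypotheses of Theorem \ref{T2} but is \emph{not} fractional $(a,b,m)$-covered. Then there exist a subgraph $H\subseteq G$ with $|E(H)|=m$ and a subset $S\subseteq V(G)$ such that, defining the deficient set $T=\{v\in V(G)\setminus S:\ d_{G-S}(v)\le a-1\}$ (or the analogous set dictated by the Gao--Wang formulation), the Gao--Wang inequality fails:
$$
b|S|\;-\;\sum_{v\in T}\bigl(a-d_{G-S}(v)\bigr)\;\le\;\varepsilon_H(S,T)\;-\;1,
$$
where $\varepsilon_H(S,T)$ is an explicit nonnegative quantity bounded above by a linear function of $m$ determined by how the edges of $H$ meet $S$ and $T$.

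First I would fix $s=|S|$ and $t=|T|$ and exploit the minimum degree hypothesis $\delta(G)\ge a+m$. Since $d_{G-S}(v)\le a-1$ for each $v\in T$, every such $v$ satisfies $d_S(v)\ge m+1$; in particular the violation forces $s\ge 1$, and in fact $s$ cannot be too small relative to $m$. I would also derive, from the violation inequality, a structural upper bound on $t$ in terms of $s$ (roughly $at\le bs+O(m)$) that will be used in the edge-count step.

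Next, I would partition the edges of $G$ into three groups and bound each:
$$
e(G)\;\le\;\binom{n-s-t}{2}\;+\;s(n-s)\;+\;\binom{s}{2}\;+\;\sum_{v\in T}d_{G-S}(v),
$$
and use $\sum_{v\in T}d_{G-S}(v)\le (a-1)t$ together with the violation inequality to absorb the last sum into an $s,t,m$-explicit expression. This yields $e(G)\le \Phi(n,s,t,m)$. I would then show
$$
\Phi(n,s,t,m)\;<\;\binom{n-b-1}{2}+ab+2a+(b+1)m
$$
for all admissible $(s,t)$, splitting into the regimes $1\le t\le b$ (where the dominant contribution comes from $s\le a$, mirroring the extremal configuration $K_{a+m-1}\vee(K_{b-m+1}+K_{n-a-b})$-type graph) and $t\ge b+1$ (where the term $\binom{n-s-t}{2}+s(n-s)$ dominates and is maximized on the boundary). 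The size condition $n\ge 4a+\tfrac{5b}{2}+4m+7$ is calibrated precisely so that $\Phi$ is dominated by the target in both regimes with a single unit of slack, which produces the contradiction with the hypothesis $e(G)\ge\binom{n-b-1}{2}+ab+2a+(b+1)m$.

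The main obstacle will be the case analysis in the third step. The Gao--Wang characterization distinguishes several ``types'' of violating triples $(S,T,H)$ according to how the $m$ prescribed edges are distributed (both endpoints in $S$, both in $T$, one in each of $S$, $T$, $V(G)\setminus(S\cup T)$), and each placement alters $\varepsilon_H(S,T)$ by a different amount. Showing that the coefficient of $m$ on the right-hand side can in every case be tightened to $b+1$, and that the ``crossover'' between the $t\le b$ and $t\ge b+1$ regimes is sharp, is the delicate computation on which the theorem rests; it is exactly here that the correction by Gao and Wang over Liu's original statement is needed, since Liu's formulation would introduce extra slack that spoils the coefficient $b+1$.
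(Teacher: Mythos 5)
Your overall skeleton --- argue by contradiction, invoke the Gao--Wang criterion, bound $e(G)$ by a decomposition of the form $\binom{n-t}{2}+st+\sum_{v\in T}d_{G-S}(v)$, and optimize over $(s,t)$ --- is the same as the paper's. But you misidentify where the difficulty lies and leave the actual quantitative core unaddressed. You declare the ``main obstacle'' to be a case analysis over how the $m$ edges of $H$ are distributed among $S$, $T$ and $V(G)\setminus(S\cup T)$, and you claim the coefficient $b+1$ of $m$ must be extracted from a refined, placement-dependent bound on $\varepsilon_H(S,T)$. No such analysis is needed: the paper uses the uniform bound $\sum_{x\in S}d_H(x)-e_H(T,S)+\Theta(S,T)\le 2m$ (Fact 1), so the violation inequality is simply $\sum_{v\in T}d_{G-S}(v)-at+bs\le 2m-1$, and the term $(b+1)m$ in the target arises purely from the algebraic rewriting of $2m-1+at-bs+st+\binom{n-t}{2}$ as $\binom{n-b-1}{2}+ab+2a+(b+1)m-f(t)$. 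The step you flag as delicate is a non-issue, while the step you gloss over --- showing $f(t)>0$ on the admissible region --- is the real content.

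Concretely, three things are missing or pointed the wrong way. First, the auxiliary inequality you propose, ``$at\le bs+O(m)$, an upper bound on $t$ in terms of $s$,'' has the wrong direction: since $\sum_{v\in T}d_{G-S}(v)\ge 0$, the violation gives $bs\le at+2m-1$, i.e.\ an \emph{upper bound on $s$ in terms of $t$} (the paper derives $s\le t+m-1$ when $a=b$, and either $s\le t-(b-a)+m-1$ or $s\le a-\frac{1}{b-a}+m$ when $a<b$). This direction is essential because the edge bound is increasing in $s$ once $t>b$, so one must cap $s$, not $t$. Second, your regime split $1\le t\le b$ versus $t\ge b+1$ is not the operative one: the hypothesis $\delta(G)\ge a+m$ first forces $s\ge m+1$ and then $t\ge b+1$ unconditionally, so your first regime is empty; the split that matters is $b+1\le t\le \frac{n}{2}$ versus $t\ge\frac{n+1}{2}$ (using $s\le n-t$ in the latter), which is what controls the sign of the quadratic $f(t)$ and is where the hypothesis $n\ge 4a+\frac{5b}{2}+4m+7$ is consumed. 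Third, the extremal configuration $K_{a+m-1}\vee(K_{b-m+1}+K_{n-a-b})$ you invoke plays no role in the argument and is not established anywhere. As written, the proposal is a plausible outline whose stated crux is a red herring and whose genuine crux is absent.
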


If $a=b=k$, we can obtain conditions for a graph to be fractional $(k,m)$-covered.

\noindent\begin{theorem}  \  For integers $k\ge2$ and $1\le m\le k$, let $G$ be a connected graph of order $n\ge\frac{1}{2}[6k+k^2+(k+8)m+16]$ with minimum degree $\delta(G)\ge k+m$. If
	$$\rho(G)\ge n-k-1,$$
	then $G$ is a fractional $(k,m)$-covered graph. 
\end{theorem}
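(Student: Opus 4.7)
The plan is to obtain this theorem as an immediate specialization of Theorem~\ref{T1} under the substitution $a = b = k$. By definition, a fractional $(k,k,m)$-covered graph is exactly what the paper has just defined to be a fractional $(k,m)$-covered graph, so once the hypotheses are seen to match, the conclusion transfers verbatim without any further argument.

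The verification is purely arithmetic bookkeeping. Setting $a = b = k$ in the order bound $\frac{1}{2}[4b + 2a + ab + (b+8)m + 16]$ appearing in Theorem~\ref{T1} yields $\frac{1}{2}[4k + 2k + k^2 + (k+8)m + 16] = \frac{1}{2}[6k + k^2 + (k+8)m + 16]$, which is exactly the lower bound on $n$ stated here. The minimum-degree hypothesis $\delta(G) \ge a + m$ becomes $\delta(G) \ge k + m$; the spectral hypothesis $\rho(G) \ge n - b - 1$ becomes $\rho(G) \ge n - k - 1$; and the parameter constraints $2 \le a \le b$ together with $1 \le m \le b$ reduce to $k \ge 2$ and $1 \le m \le k$. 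All of these are precisely the assumptions of the present statement, so Theorem~\ref{T1} applies and its conclusion gives that $G$ is a fractional $(k,k,m)$-covered graph, i.e.\ a fractional $(k,m)$-covered graph.

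Since this is a direct corollary, there is no real mathematical obstacle; the only thing to be careful about is making explicit the definitional identification between $(k,k,m)$-covered and $(k,m)$-covered graphs and then recording the matching of each parameter condition. No new spectral, degree, or order estimates need to be established beyond what has already been obtained for Theorem~\ref{T1}.
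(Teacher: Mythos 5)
Your proposal is correct and matches the paper exactly: the paper states this result as an immediate specialization of Theorem~\ref{T1} with $a=b=k$, and your arithmetic check of each hypothesis (order bound, minimum degree, spectral condition, parameter ranges) and the definitional identification of $(k,k,m)$-covered with $(k,m)$-covered is precisely what is needed. No further argument is required.
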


\noindent\begin{theorem}  \  
	\  For integers $k\ge2$ and $1\le m\le k$, let $G$ be a connected graph of order $n\ge \frac{13k}{2}+4m+7$ with minimum degree $\delta(G)\ge k+m$. If
	$$e(G)\ge \binom{n-k-1}{2}+k^2+2k+(k+1)m,$$
	then $G$ is a fractional $(k,m)$-covered graph. 
\end{theorem}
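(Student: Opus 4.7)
The plan is to derive this theorem as an immediate specialization of Theorem \ref{T2}. Setting $a=b=k$ in Theorem \ref{T2}, I would verify that each hypothesis of Theorem \ref{T2} matches the present statement: the condition $2\le a\le b$ becomes $k\ge 2$; the restriction $1\le m\le b$ becomes $1\le m\le k$; the order bound $n\ge 4a+\frac{5b}{2}+4m+7$ reduces to $n\ge\frac{13k}{2}+4m+7$; the minimum-degree bound $\delta(G)\ge a+m$ becomes $\delta(G)\ge k+m$; and the edge count $\binom{n-b-1}{2}+ab+2a+(b+1)m$ collapses to $\binom{n-k-1}{2}+k^2+2k+(k+1)m$. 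These translations exhaust the hypotheses on both sides and they coincide term-by-term.

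Next, by the convention fixed in the paragraph preceding Theorem \ref{T1}, a fractional $(k,k,m)$-covered graph is, by definition, exactly a fractional $(k,m)$-covered graph. Consequently the conclusion ``$G$ is a fractional $(a,b,m)$-covered graph'' provided by Theorem \ref{T2} translates directly into ``$G$ is a fractional $(k,m)$-covered graph,'' which is precisely the desired conclusion. No independent argument beyond invoking Theorem \ref{T2} is needed.

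The only point requiring care is to confirm that no step in the proof of Theorem \ref{T2} silently uses the strict inequality $a<b$. Since Theorem \ref{T2} is stated with the weak inequality $a\le b$, this is automatic and no reworking of intermediate estimates is necessary. Hence I do not anticipate any real obstacle: the entire proof reduces to the verification chain above, presented as a short corollary-style argument that invokes Theorem \ref{T2} with $a=b=k$.
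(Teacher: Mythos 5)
Your proposal is correct and matches the paper exactly: the paper obtains this statement as a direct specialization of Theorem \ref{T2} with $a=b=k$, and your term-by-term verification of the hypotheses and of the numerical bound $4k+\frac{5k}{2}+4m+7=\frac{13k}{2}+4m+7$ is all that is needed. Your concern about the case $a=b$ is also moot in the right way, since the paper's proof of Theorem \ref{T2} explicitly treats $a=b=k$ as its own subcase (Case 1.1).
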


Particularly, if we set $m=1$ in Theorems \ref{T1} and \ref{T2}, then we can derive the spectral radius condition and size condition for a graph to be a fractional $[a,b]$-covered graph.

\noindent\begin{theorem}  \  For integers $2\le a\le b$, let $G$ be a connected graph of order $n\ge\frac{1}{2}[5b+2a+ab+24]$ with minimum degree $\delta(G)\ge a+1$. If
	$$\rho(G)\ge n-b-1,$$
	then $G$ is a fractional $[a,b]$-covered graph. 
\end{theorem}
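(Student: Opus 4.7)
The plan is to obtain this theorem as a direct corollary of Theorem~\ref{T1} by specializing to the case $m = 1$. The key conceptual step is a definitional identification: when $m = 1$, any subgraph $H \subseteq G$ with $|E(H)| = 1$ is simply a single edge $e$, so the requirement that some fractional $[a,b]$-factor satisfy $h(e') = 1$ for every $e' \in E(H)$ reduces to the requirement that for every edge $e \in E(G)$ there exist a fractional $[a,b]$-factor with indicator function $h$ such that $h(e) = 1$. This is precisely the definition of a fractional $[a,b]$-covered graph given in the introduction. Hence the notions of fractional $(a,b,1)$-covered graph and fractional $[a,b]$-covered graph coincide.

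Next I would verify that each hypothesis of Theorem~\ref{T1} specializes correctly to the hypothesis of the present statement. Substituting $m = 1$ into the order bound yields $n \ge \frac{1}{2}[4b + 2a + ab + (b+8) + 16] = \frac{1}{2}[5b + 2a + ab + 24]$, which is exactly the stated bound. The minimum-degree condition $\delta(G) \ge a + m$ becomes $\delta(G) \ge a + 1$, matching the hypothesis. The spectral radius condition $\rho(G) \ge n - b - 1$ is independent of $m$ and therefore unchanged, and the connectivity of $G$ together with the integer range $2 \le a \le b$ are inherited verbatim.

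With the parameters lined up, Theorem~\ref{T1} immediately delivers that $G$ is a fractional $(a,b,1)$-covered graph, which by the definitional identification in the first paragraph is the desired conclusion that $G$ is a fractional $[a,b]$-covered graph. There is no substantive obstacle: the only care required is bookkeeping for the parameter substitution and the explicit verification that ``covering every singleton edge-subgraph'' is the same property as ``covering every edge.'' Consequently, once Theorem~\ref{T1} is in hand, no additional graph-theoretic argument is needed.
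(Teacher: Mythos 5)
Your proposal is correct and matches the paper's own derivation exactly: the paper states that this theorem follows by setting $m=1$ in Theorem~\ref{T1}, and your arithmetic check of the order bound $\frac{1}{2}[4b+2a+ab+(b+8)+16]=\frac{1}{2}[5b+2a+ab+24]$ and the identification of fractional $(a,b,1)$-covered with fractional $[a,b]$-covered are both accurate. No further argument is needed.
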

\noindent\begin{theorem}  \  For integers $2\le a\le b$, let $G$ be a connected graph of order $n\ge 4a+\frac{5b}{2}+11$ with minimum degree $\delta(G)\ge a+1$. If
	$$e(G)\ge \binom{n-b-1}{2}+ab+2a+b+1,$$
	then $G$ is a fractional $[a,b]$-covered graph. 
\end{theorem}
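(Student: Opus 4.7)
The plan is to derive this statement as the $m=1$ specialization of Theorem~\ref{T2}. First I would observe that a fractional $[a,b]$-covered graph is exactly a fractional $(a,b,1)$-covered graph: the defining condition ``for every edge $e\in E(G)$ there exists a fractional $[a,b]$-factor with indicator $h$ satisfying $h(e)=1$'' is the $m=1$ instance of the condition ``for every $H\subseteq G$ with $|E(H)|=m$ there exists a fractional $[a,b]$-factor whose indicator takes the value $1$ on every edge of $H$.'' So it suffices to verify that the hypotheses of Theorem~\ref{T2} are met after substituting $m=1$.

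Second, I would check the three numerical hypotheses. The integer condition $1\le m\le b$ of Theorem~\ref{T2} becomes $1\le 1\le b$, which holds since $b\ge a\ge 2$. The order condition $n\ge 4a+\tfrac{5b}{2}+4m+7$ specializes to $n\ge 4a+\tfrac{5b}{2}+11$, matching the hypothesis. The minimum-degree condition $\delta(G)\ge a+m$ becomes $\delta(G)\ge a+1$, again matching. Finally, the size condition $e(G)\ge \binom{n-b-1}{2}+ab+2a+(b+1)m$ specializes to $e(G)\ge \binom{n-b-1}{2}+ab+2a+b+1$, which is the hypothesis.

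Third, since every assumption of Theorem~\ref{T2} holds with $m=1$, the conclusion ``$G$ is a fractional $(a,b,1)$-covered graph'' follows, which is equivalent to ``$G$ is a fractional $[a,b]$-covered graph'' by the observation above, completing the proof. The only possible obstacle here is a notational one: one must confirm that the definition of fractional $(a,b,m)$-covered used in Theorem~\ref{T2} coincides, at $m=1$, with the classical notion of fractional $[a,b]$-covered recalled in the introduction; once this identification is made, no further work is required and the theorem is an immediate corollary rather than an independent proof.
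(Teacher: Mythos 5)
Your proposal is correct and matches the paper exactly: the paper itself states this theorem as the $m=1$ specialization of Theorem~\ref{T2}, and your verification of the hypotheses (order, minimum degree, size bound) and of the identification of fractional $(a,b,1)$-covered with fractional $[a,b]$-covered is precisely what is needed.
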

By comparing with the results in \cite{WZC}, it is easy to see that $\rho(K_{a-1} \lor (K_{n-a} \cup K_1)) > \rho(K_{n-1}) = n-2 > n-b-1$ when $2\le a\le b $. Hence, our result makes a minor enhancement to the lower bounds of Theorem \ref{T6} \cite{WZC} through confining  $\delta \geq a+1$.

The remainder of this paper is organized as follows. In Section 2, we present several useful lemmas for proving the theorems in subsequent sections. In Section 3, we provide the proof of Theorem \ref{T1}. In Section 4, we provide the proof of Theorem \ref{T2}. 

\section{Preliminaries}

\quad\quad In this section, we present several useful lemmas for proving the theorems in subsequent sections. Recently, Gao and Wang \cite{GW} obtained the following necessary and sufficient condition for a graph to be a fractional 
$(g,f,m)$-covered graph.

\begin{lemma}(\cite{GW})\label{le:1}  \ 
		Let $G$ be a graph, $g$, $f$ be two non-negative integer-valued functions defined on $V(G)$ such that $g(x) \leq f(x)$ for each $x \in V(G)$. Let \( m \) be an integer with $0\le m \leq \min_{x \in V(G)} \{f(x)\}$. Then $G$ is a fractional $(g, f, m)$-covered graph if and only if for any $S \subseteq V(G)$,
		\[
		\sum_{x \in T} d_{G-S}(x)-\sum_{x \in T} g(x) +\sum_{x \in S} f(x) \geq \max_{\substack{H \subseteq G, |E(H)| = m}} \left\{\sum_{x \in S} d_H(x) - e_H(T, S) + \Theta(S, T)\right\},
		\]
		where $T = \{x : x \in V(G)\setminus S, d_{G - S}(x) \leq g(x) - 1\}$ and
		\[
		\Theta(S, T)=\sum_{\substack{1\leq d_{G\setminus E(H)-S}(x)-g(x)+d_H(x)\leq e_H(x,S)-1,e_H(x,S)\geq 2}}\{d_{G\setminus E(H)-S}(x)-g(x)+d_H(x)\}.
		\]
\end{lemma}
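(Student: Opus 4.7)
The plan is to reduce the fractional $(g,f,m)$-covered property to the existence of a fractional factor on an auxiliary graph, and then invoke the classical Anstee-type characterization of fractional $(g,f)$-factors. For a fixed $H \subseteq G$ with $|E(H)| = m$, a fractional $(g,f)$-factor of $G$ with indicator $h \equiv 1$ on $E(H)$ exists if and only if $G \setminus E(H)$ admits a fractional $(g - d_H, f - d_H)$-factor; this is seen by writing $h = \mathbf{1}_{E(H)} + h'$ and verifying the local degree constraints vertex by vertex. Consequently, $G$ is fractional $(g,f,m)$-covered iff for every $H \subseteq G$ with $|E(H)| = m$, the edge-deleted graph $G \setminus E(H)$ admits a fractional $(g - d_H, f - d_H)$-factor.

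I would then apply the classical characterization (Anstee / Liu--Zhang): a graph $G'$ has a fractional $(g',f')$-factor iff for every $S' \subseteq V(G')$,
\[
\sum_{x \in S'} f'(x) + \sum_{x \in T'} d_{G'-S'}(x) \ \geq\ \sum_{x \in T'} g'(x),
\]
where $T' = \{x \in V(G') \setminus S' : d_{G'-S'}(x) < g'(x)\}$. Setting $G' = G \setminus E(H)$, $g' = g - d_H$, $f' = f - d_H$, and using the identities
\[
d_{G-S}(x) = d_{G \setminus E(H) - S}(x) + e_H(x, V(G)\setminus S), \qquad d_H(x) = e_H(x,S) + e_H(x, V(G)\setminus S),
\]
one can rewrite the resulting inequality purely in terms of $G$, $g$, $f$, and the edge counts coming from $H$.

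The main obstacle --- and the source of the correction term $\Theta(S,T)$ --- is that after translation the inequality naturally refers to the set $T^{*} = \{x \in V(G)\setminus S : d_{G-S}(x) - g(x) \leq e_H(x,S) - 1\}$, whereas the statement uses the cleaner set $T = \{x : d_{G-S}(x) \leq g(x) - 1\}$, which corresponds only to the case $e_H(x,S) = 0$. Since $T \subseteq T^{*}$, the extra vertices $x \in T^{*} \setminus T$ --- those with $0 \leq d_{G-S}(x) - g(x) \leq e_H(x,S) - 1$ --- must have their contributions $d_{G \setminus E(H) - S}(x) - g(x) + d_H(x)$ booked separately; summing these residues over the nontrivial range $e_H(x,S) \geq 2$ produces exactly $\Theta(S,T)$. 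Taking the maximum over all $H \subseteq G$ with $|E(H)| = m$ then yields the sufficiency direction. Necessity follows by the reverse computation: given a valid indicator $h$ with $h \equiv 1$ on $E(H)$, summing $h(e)$ over edges with an endpoint in $T$ and applying the constraints $g(x) \leq \sum_{e \in E_G(x)} h(e) \leq f(x)$ vertex by vertex directly delivers the claimed inequality.
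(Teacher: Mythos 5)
First, note that the paper does not prove this lemma at all: it is quoted verbatim from Gao and Wang \cite{GW}, so there is no in-paper proof to compare against. On its own merits, your overall architecture is the right one --- absorb $H$ into the indicator via $h=\mathbf{1}_{E(H)}+h'$, reduce to the existence of a fractional $(g-d_H,\,f-d_H)$-factor of $G\setminus E(H)$, apply the Anstee/Liu--Zhang criterion there, and translate back; the hypothesis $m\le\min_x f(x)$ is precisely what keeps $f-d_H\ge 0$ (since $d_H(x)\le m$).

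There is, however, a concrete error in the translation step: you have the containment between $T$ and the ``natural'' set reversed. Applying the criterion to $G'=G\setminus E(H)$ with $g'=g-d_H$ gives the set $T'=\{x\in V(G)\setminus S : d_{G'-S}(x)\le g'(x)-1\}$, and since $d_{G\setminus E(H)-S}(x)=d_{G-S}(x)-d_H(x)+e_H(x,S)$ for $x\notin S$, this condition unravels to $d_{G-S}(x)\le g(x)-e_H(x,S)-1$. Hence $T'\subseteq T$, not $T\subseteq T^{*}$ as you claim; your $T^{*}$ carries the wrong sign on $e_H(x,S)$. The correction term must therefore account for the vertices of $T\setminus T'$, which satisfy $0\le d_{G\setminus E(H)-S}(x)-g(x)+d_H(x)\le e_H(x,S)-1$ --- exactly the summation range of $\Theta(S,T)$ once the zero terms are dropped and one observes that the range is empty unless $e_H(x,S)\ge 2$. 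This is also consistent with the Remark quoted in the paper that the vertices contributing to $\Theta(S,T)$ lie in $T$. By contrast, your exceptional vertices $x\in T^{*}\setminus T$ are not in $T$ and satisfy $d_{G\setminus E(H)-S}(x)-g(x)+d_H(x)\ge e_H(x,S)$, so they contribute nothing to $\Theta$ as defined, and booking them as you propose would yield a different, incorrect inequality. With the containment corrected the rest of your plan does go through: writing $\sum_{x\in T'}\bigl(d_{G'-S}(x)-g'(x)\bigr)=\sum_{x\in T}\bigl(d_{G-S}(x)-g(x)\bigr)+e_H(T,S)-\Theta(S,T)$ and combining with $\sum_{x\in S}\bigl(f(x)-d_H(x)\bigr)\ge 0$-type terms recovers the stated inequality, and quantifying over all $H$ with $|E(H)|=m$ produces the maximum.
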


From Lemma \ref{le:1}, the following lemma is a natural consequence.

\begin{lemma}(\cite{GW,GWC})\label{le:2}
Let \( G \) be a graph, and let \( a, b, m \) be three integers with  \( 1\le a \leq b \) and \(0\le m \leq b \). Then \( G \) is a fractional \((a, b, m)\)-covered graph if and only if for any \( S \subseteq V(G) \), we have
	\[
	\sum_{v \in T} d_{G-S}(v) - a|T| + b|S| \geq \max_{\substack{H \subseteq G, |E(H)| = m}} \left\{ \sum_{x \in S} d_H(x) - e_H(T, S) + \Theta(S, T) \right\}, 
	\]
	where \( T = \bigl\{ x \mid x \in V(G)\setminus S,\ d_{G - S}(x) \leq a - 1 \bigr\} \) and
	\[
	\Theta(S, T) = \sum_{\substack{1 \leq d_{G \setminus E(H) - S}(x) - a + d_H(x) \leq e_H(x, S) - 1, e_H(x, S) \geq 2}} \bigl\{ d_{G \setminus E(H) - S}(x) - a + d_H(x) \bigr\}.
	\]
\end{lemma}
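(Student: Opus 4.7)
The plan is to deduce Lemma \ref{le:2} directly from Lemma \ref{le:1} by specializing the two functions $g$ and $f$ to constants. Define $g, f : V(G) \to \mathbb{Z}_{\ge 0}$ by $g(x) = a$ and $f(x) = b$ for every $x \in V(G)$. Since $1 \le a \le b$, these are legitimate non-negative integer-valued functions with $g(x) \le f(x)$ everywhere. Moreover, $\min_{x \in V(G)} f(x) = b$, so the hypothesis $0 \le m \le \min_x f(x)$ of Lemma \ref{le:1} is exactly $0 \le m \le b$, which is given. By the very definition of a fractional $[a,b]$-factor as the case $g \equiv a$, $f \equiv b$ of a fractional $(g,f)$-factor, a graph $G$ is fractional $(a,b,m)$-covered if and only if it is fractional $(g,f,m)$-covered under this specialization.

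With these identifications, I would substitute directly into the characterization provided by Lemma \ref{le:1}. The set $T$ becomes
\[
T = \{x \in V(G) \setminus S : d_{G-S}(x) \le g(x) - 1\} = \{x \in V(G) \setminus S : d_{G-S}(x) \le a - 1\},
\]
which matches the set $T$ in Lemma \ref{le:2}. On the left-hand side of the key inequality, $\sum_{x \in T} g(x) = a|T|$ and $\sum_{x \in S} f(x) = b|S|$, so it simplifies to $\sum_{v \in T} d_{G-S}(v) - a|T| + b|S|$. The right-hand side requires only replacing each occurrence of $g(x)$ by $a$ inside $\Theta(S,T)$, producing exactly the expression stated in Lemma \ref{le:2}. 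Because Lemma \ref{le:1} is an if-and-only-if characterization, the translated inequality inherits the biconditional character needed by Lemma \ref{le:2}.

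I do not anticipate a genuine obstacle: the argument is a pure specialization of an already-proved biconditional. The only care required is to verify the boundary conditions, namely that $a \le b$ is preserved, that $m \le b$ aligns with the hypothesis of Lemma \ref{le:1}, and that the pointwise constraints used to define $T$ and to restrict the summation in $\Theta(S,T)$ translate correctly when $g(x)$ is replaced throughout by the constant $a$. All of these are routine substitutions that do not alter the logical content of the original characterization, so the deduction is immediate.
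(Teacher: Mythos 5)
Your proposal is correct and follows exactly the route the paper takes: the paper derives Lemma \ref{le:2} from Lemma \ref{le:1} simply by observing it is the specialization $g \equiv a$, $f \equiv b$, which is precisely your substitution argument. The bookkeeping you carry out ($\sum_{x\in T} g(x) = a|T|$, $\sum_{x\in S} f(x) = b|S|$, $\min_x f(x) = b$, and the replacement of $g(x)$ by $a$ in $T$ and $\Theta(S,T)$) is all that is needed.
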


In the Remark 3 of \cite{GW}, based on the construction of the set $\Theta(S,T)$, Gao and Wang pointed out that the vertices in $\Theta(S,T)$ belong to $T$, and the following fact holds.

\begin{fact}(\cite{GW})
For any $H\subseteq G$ with $|E(H)|=m$, $\sum_{x \in S} d_H(x) - e_H(T, S) + \Theta(S, T)\le 2m$.
\end{fact}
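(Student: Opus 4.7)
The plan is to bound $\Theta(S,T)$ above by $e_H(T,S)$, so that the contribution $-e_H(T,S)+\Theta(S,T)$ on the left-hand side becomes nonpositive; then the only remaining term is $\sum_{x\in S}d_H(x)$, which is at most $2m$ by the handshake identity.

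First I would unpack the definition of $\Theta(S,T)$ directly. By construction, every vertex $x$ contributing to the sum satisfies $e_H(x,S)\ge 2$ and contributes a value $d_{G\setminus E(H)-S}(x)-a+d_H(x)\le e_H(x,S)-1$; moreover, by Remark~3 of Gao--Wang (recalled in the paragraph just before the Fact), all such $x$ lie in $T$. Replacing each summand by its stated upper bound and then enlarging the index set to all of $T$ yields
\[
\Theta(S,T)\ \le\ \sum_{\substack{x\in T\\ e_H(x,S)\ge 2}}\bigl(e_H(x,S)-1\bigr)\ \le\ \sum_{x\in T} e_H(x,S)\ =\ e_H(T,S),
\]
where the middle step discards the $-1$'s (losing only a nonpositive count) and adds in the nonnegative contributions from the vertices with $e_H(x,S)\in\{0,1\}$, and the final equality is the standard bipartite edge-count in $H$.

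Substituting this bound into the left-hand side of the Fact gives $\sum_{x\in S}d_H(x)-e_H(T,S)+\Theta(S,T)\le \sum_{x\in S}d_H(x)$. Finally, each edge of $H$ contributes $0$, $1$, or $2$ to $\sum_{x\in V(G)}d_H(x)$ according to how many of its endpoints lie in $S$, so the handshake identity $\sum_{x\in V(G)}d_H(x)=2|E(H)|=2m$ together with the nonnegativity of the discarded terms yields $\sum_{x\in S}d_H(x)\le 2m$, which closes the argument. I do not anticipate any real obstacle: the proof is a pure unpacking of the definition of $\Theta(S,T)$ plus the handshake identity, and uses no structural property of $G$ or of the $(a,b,m)$-covered concept. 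The only step deserving a line of care is the citation of the remark that confines the indexing of $\Theta(S,T)$ to $T$, since without this one could not directly rewrite the first bound as a sum over $T$.
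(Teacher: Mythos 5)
Your proof is correct: each summand of $\Theta(S,T)$ is at most $e_H(x,S)-1$ by the very condition defining its index set, the indexing vertices lie in $T$ (the cited Remark~3 of Gao--Wang), so $\Theta(S,T)\le e_H(T,S)$, and the handshake identity finishes with $\sum_{x\in S}d_H(x)\le 2m$. The paper itself gives no proof of this Fact (it is quoted from \cite{GW}), but your argument is the natural unpacking of the definition and matches the justification the paper attributes to that reference.
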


In addition, based on Fact 1, Gao et al. \cite{GWC} provided a sufficient condition for a graph to be fractional $(a,b,m)$-covered.

\begin{lemma}(\cite{GWC})
	Let \( G \) be a graph, and let \( a, b, m \) be three integers with  \( 1\le a \leq b \) and \(0\le m \leq b \). For any \( S \subseteq V(G) \), if
	
	\[
	\sum_{v \in T} d_{G-S}(v) - a|T| + b|S| \geq 2m,
	\]
where \( T = \bigl\{ x \mid x \in V(G) \setminus S,\ d_{G - S}(x) \leq a - 1 \bigr\} \), then \( G \) is fractional \((a, b, m)\)-covered.
\end{lemma}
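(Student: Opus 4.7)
The plan is to deduce this lemma as an immediate corollary of Lemma \ref{le:2} together with Fact 1. Both ingredients are already available in the preliminaries, so no independent combinatorial argument is needed; the work is purely in chaining the two statements with the correct choice of $T$.

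First I would fix an arbitrary $S \subseteq V(G)$ and let $T = \{x \in V(G)\setminus S : d_{G-S}(x) \le a-1\}$. Note that this $T$ coincides verbatim with the set $T$ used in the statement of Lemma \ref{le:2}, so there is no ambiguity when invoking that characterization. Next, for each subgraph $H \subseteq G$ with $|E(H)| = m$, Fact 1 gives
$$\sum_{x \in S} d_H(x) - e_H(T, S) + \Theta(S, T) \le 2m,$$
and taking the maximum over all such $H$ preserves this bound. Combining this with the hypothesis
$$\sum_{v \in T} d_{G-S}(v) - a|T| + b|S| \ge 2m$$
yields
$$\sum_{v \in T} d_{G-S}(v) - a|T| + b|S| \ge \max_{\substack{H \subseteq G,\ |E(H)|=m}} \left\{ \sum_{x \in S} d_H(x) - e_H(T, S) + \Theta(S, T) \right\}.$$

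Since $S$ was arbitrary, the necessary and sufficient condition in Lemma \ref{le:2} is satisfied, and therefore $G$ is fractional $(a,b,m)$-covered. I do not anticipate any real obstacle in this argument; the only subtlety is ensuring that the definition of $T$ in the hypothesis matches the one appearing in Lemma \ref{le:2} (it does), and that Fact 1 is applied uniformly across all eligible $H$ before the maximum is taken.
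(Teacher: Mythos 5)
Your proposal is correct and is exactly the argument the paper intends: the lemma is presented as an immediate consequence of chaining Fact 1 (which bounds the maximum over all $H$ by $2m$) with the characterization in Lemma \ref{le:2}. The paper gives no further detail, so your derivation matches its approach.
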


Using the fundamentals of non-negative matrices, we present a classical result on the comparison of the spectral radius between a graph and its subgraph.

\begin{lemma}(\cite{BA})\label{le:3} \ Let $ G $ be a connected graph and $G^{*} $ be a subgraph of $ G $. Then 
	$$ \rho(G^{*}) \leq \rho(G), $$
	with the equality holds if and only if $ G^{*} \cong G $.
\end{lemma}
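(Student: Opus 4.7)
The plan is to invoke the Perron-Frobenius theory for non-negative matrices. Since $G$ is connected, its adjacency matrix $A(G)$ is a non-negative symmetric irreducible matrix, so $\rho(G)$ is a simple eigenvalue of $A(G)$ with a strictly positive Perron eigenvector. The two statements of the lemma correspond, respectively, to the weak monotonicity and the strict monotonicity of the spectral radius with respect to the entrywise order on non-negative matrices.

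For the inequality, I would first reduce to a clean matrix statement. Fix a labeling of $V(G)$ and let $B$ be the $n\times n$ matrix obtained from $A(G^{*})$ by inserting zero rows and columns in the positions corresponding to vertices of $V(G)\setminus V(G^{*})$ (when $G^{*}$ has fewer vertices than $G$). Then $B$ has the same non-zero eigenvalues as $A(G^{*})$, hence $\rho(B)=\rho(G^{*})$, while entrywise $0\le B\le A(G)$ because every edge of $G^{*}$ is an edge of $G$. The standard monotonicity of the spectral radius on non-negative matrices then immediately yields $\rho(G^{*})=\rho(B)\le \rho(A(G))=\rho(G)$.

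For the equality case, I would appeal to the sharper Perron-Frobenius statement: if $M$ is an irreducible non-negative matrix and $0\le N\le M$ with $N\ne M$, then $\rho(N)<\rho(M)$. Applied with $M=A(G)$ (irreducible because $G$ is connected) and $N=B$, the assumption $\rho(G^{*})=\rho(G)$ forces $B=A(G)$, and therefore $V(G^{*})=V(G)$ and $E(G^{*})=E(G)$, i.e.\ $G^{*}\cong G$. The one-line verification of this sharper statement that I have in mind uses the positive left Perron eigenvector $\mathbf{x}^{T}$ of $A(G)$: if $N\ne M$, then some entry $(M-N)_{ij}$ is positive, so $\mathbf{x}^{T}(M-N)\mathbf{y}>0$ for any non-negative Perron vector $\mathbf{y}$ of $N$ satisfying $y_{j}>0$, which (using the symmetry of $M$) contradicts $\rho(M)=\rho(N)$.

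The main obstacle is the equality clause, and its whole subtlety lies in having to use the irreducibility of $A(G)$, which is exactly where the connectedness hypothesis enters. Without connectedness the equality conclusion breaks (e.g.\ appending an isolated vertex to $G$ leaves $\rho(G)$ unchanged), so any argument based solely on weak matrix monotonicity is insufficient; the proof must at some point exploit the strict positivity of the Perron eigenvector of $A(G)$.
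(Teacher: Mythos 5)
The paper offers no proof of this lemma --- it is quoted directly from Bapat's book --- so there is nothing internal to compare against; your Perron--Frobenius plan (pad $A(G^{*})$ to an $n\times n$ matrix $B$ with $0\le B\le A(G)$ entrywise, use weak monotonicity of the spectral radius for the inequality, and use irreducibility of $A(G)$ for the equality case) is the standard route and is the right one.

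There is, however, a genuine hole in your one-line verification of the strict-monotonicity step. You pick $(i,j)$ with $(M-N)_{ij}>0$ and assert $\mathbf{x}^{T}(M-N)\mathbf{y}>0$ ``for any non-negative Perron vector $\mathbf{y}$ of $N$ satisfying $y_{j}>0$'' --- but you never show that $N$ has a Perron eigenvector positive in that particular coordinate $j$. Since $N=B$ is typically reducible (it has entire zero rows whenever $G^{*}$ omits vertices of $G$, and is reducible whenever $G^{*}$ is disconnected), its $\rho(N)$-eigenspace may be supported entirely away from $j$, and then your chosen $\mathbf{y}$ does not exist and the contradiction never materializes. The standard repair stays inside your framework: let $\mathbf{x}>0$ be the (left) Perron vector of $M=A(G)$ and $\mathbf{y}\ge 0$, $\mathbf{y}\ne 0$, any Perron vector of $N$; then $\mathbf{x}^{T}(M-N)\mathbf{y}=(\rho(M)-\rho(N))\,\mathbf{x}^{T}\mathbf{y}=0$ under the equality hypothesis, and since $\mathbf{x}>0$ while $(M-N)\mathbf{y}\ge 0$ entrywise, this forces $(M-N)\mathbf{y}=0$. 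Hence $M\mathbf{y}=N\mathbf{y}=\rho(M)\mathbf{y}$, so $\mathbf{y}$ is a non-negative eigenvector of the irreducible matrix $M$ and is therefore strictly positive; only now does $(M-N)\mathbf{y}=0$ with $M-N\ge 0$ yield $M=N$, i.e.\ $G^{*}=G$. With that substitution your argument is complete.
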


Finally, we present a classical result concerning upper bounds for the spectral radius.

\begin{lemma}(\cite{HSF,N})\label{le:4} \ Let $G$ be a graph on $n$ vertices and $m$ edges with minimum degree $\delta \geq 1$. Then
	$$
	\rho(G) \leq \frac{\delta - 1}{2} + \sqrt{2m - n\delta + \frac{(\delta + 1)^2}{4}},
	$$
	with equality if and only if $G$ is either a $\delta$-regular graph or a bidegreed graph in which each vertex is of degree either $\delta$ or $n - 1$.
\end{lemma}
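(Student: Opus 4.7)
The plan is to follow the standard Perron--Frobenius argument that underlies the Hong--Shu--Fang--Nikiforov bound cited from \cite{HSF,N}. Let $x=(x_1,\ldots,x_n)^T$ be a positive Perron eigenvector of $A(G)$ corresponding to $\rho=\rho(G)$, scaled so that $x_{u^*}=\max_i x_i=1$ at some vertex $u^*\in V(G)$. Applying $A(G)x=\rho x$ at $u^*$ gives $\rho=\sum_{v\in N_G(u^*)} x_v$, and applying it once more yields the walk-count bound
$$\rho^2=\sum_{v\in N_G(u^*)}\sum_{w\in N_G(v)} x_w\le \sum_{v\in N_G(u^*)} d(v),$$
since $x_w\le 1$ for every $w$.

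Next I would plug in the minimum-degree hypothesis. Writing $T=V(G)\setminus(\{u^*\}\cup N_G(u^*))$, every $v\in T$ has $d(v)\ge \delta$, so
$$2m=\sum_{v\in V(G)} d(v)\ge d(u^*)+\sum_{v\in N_G(u^*)} d(v)+\delta(n-1-d(u^*)),$$
which rearranges to $\sum_{v\in N_G(u^*)} d(v)\le 2m-(n-1)\delta+(\delta-1)d(u^*)$, hence
$$\rho^2\le 2m-(n-1)\delta+(\delta-1)d(u^*).$$
The target is the quadratic inequality $\rho^2-(\delta-1)\rho\le 2m-(n-1)\delta$, which, after completing the square, rearranges to $\left(\rho-\tfrac{\delta-1}{2}\right)^2\le 2m-n\delta+\tfrac{(\delta+1)^2}{4}$ and hence to the stated bound by taking a square root.

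The main obstacle is the transition from $d(u^*)$ to $\rho$ in the last displayed inequality, because the Perron--Frobenius step only yields $\rho\le d(u^*)$, pointing the wrong way. The resolution used in \cite{HSF,N} is to sharpen the walk-count step by exploiting the pointwise lower bound $\rho x_w=\sum_{u\in N_G(w)} x_u\ge x_{u^*}=1$ for every $w\in N_G(u^*)$, so that $x_w\ge 1/\rho$ there; tracking this refinement through the double sum produces exactly enough extra slack to absorb the $(\delta-1)\bigl(d(u^*)-\rho\bigr)$ discrepancy and to yield the quadratic inequality above.

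For the equality characterization I would trace back each estimate: equality in the walk-count bound forces $x_w=1$ for every $w$ in the second neighborhood of $u^*$, which by Perron positivity propagates throughout $G$ and forces regularity, while equality in the degree-sum split forces every $v\in T$ to have degree exactly $\delta$. Combining these two conditions yields either a $\delta$-regular graph or a bidegreed graph with degree set $\{\delta,n-1\}$, matching the characterization in the statement.
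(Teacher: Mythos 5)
The paper does not actually prove this lemma: it is quoted verbatim from \cite{HSF,N} as a known tool, so there is no internal proof to compare your argument against. Judging your proposal on its own merits, the framework is the right one (Perron vector normalized at a maximum entry $u^*$, reduction to the quadratic inequality $\rho^2-(\delta-1)\rho\le 2m-(n-1)\delta$, the walk-count bound $\rho^2\le\sum_{v\in N_G(u^*)}d(v)$, and the degree-sum rearrangement), and those individual steps are correct. But they only yield $\rho^2\le 2m-(n-1)\delta+(\delta-1)d(u^*)$, which, as you note, is \emph{weaker} than the target because $d(u^*)\ge\rho$.

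The resolution you offer for this discrepancy is not a proof but a restatement of the difficulty. You assert that the pointwise bound $x_w\ge 1/\rho$ for $w\in N_G(u^*)$ "produces exactly enough extra slack to absorb $(\delta-1)(d(u^*)-\rho)$," yet this is precisely the substantive content of the Hong--Shu--Fang/Nikiforov theorem and you never carry out the computation. Moreover, the claim does not go through as stated: the slack available in the walk-count step is $\sum_{v\sim u^*}\sum_{w\sim v}(1-x_w)$ and what must be shown is that it dominates $(\delta-1)\sum_{v\sim u^*}(1-x_v)$; the inequality $x_v\ge 1/\rho$ only gives $1-x_v\le 1-1/\rho$, i.e., it caps the quantity you need to absorb rather than producing a matching lower bound on the slack, and the natural sufficient condition (each $v\sim u^*$ having at least $\delta-1$ neighbours $w$ with $x_w\le x_v$) is false in general. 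The delicate part of the actual argument --- controlling the second-neighbourhood multiplicities $|N(w)\cap N(u^*)|$ and the vertices outside $N[u^*]$ so that $(\delta-1)d(u^*)$ can be traded for $(\delta-1)\rho$ --- is therefore missing. The equality analysis inherits this gap and is also internally inconsistent: you argue that equality forces all Perron entries to equal $1$ and hence forces regularity, yet the stated extremal family contains non-regular bidegreed graphs (these arise when $d(u^*)=n-1$, so the set $T$ is empty), whose Perron vectors are not constant.
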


\begin{lemma}(\cite{HSF,N})\label{le:5} \ For nonnegative integers $p$ and $q$ with $2q \leq p(p - 1)$ and $0 \leq x \leq p - 1$, the function
	$$
	f(x) = \frac{x - 1}{2} + \sqrt{2q - px + \frac{(1 + x)^2}{4}}
	$$ 
	is decreasing with respect to $x$.
\end{lemma}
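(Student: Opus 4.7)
The plan is to prove Lemma 2.5 by direct differentiation and then reducing the sign of the derivative to the hypothesis $2q \le p(p-1)$. Set
\[
g(x) = 2q - px + \frac{(1+x)^2}{4},
\]
so that $f(x) = \frac{x-1}{2} + \sqrt{g(x)}$. I would first note that under the hypotheses the expression under the square root is nonnegative on $[0,p-1]$, so $f$ is genuinely differentiable on the interior, with
\[
f'(x) = \frac{1}{2} + \frac{g'(x)}{2\sqrt{g(x)}} = \frac{1}{2} + \frac{\,-p + \tfrac{1+x}{2}\,}{2\sqrt{g(x)}}.
\]

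Next, to show $f'(x)\le 0$, I would rearrange this inequality into
\[
\sqrt{g(x)} \le p - \frac{1+x}{2},
\]
first checking that the right-hand side is nonnegative: since $0\le x\le p-1$, we have $1+x\le p\le 2p$, so $p-\frac{1+x}{2}\ge \frac{p}{2}\ge 0$ (the case $p=0$ makes the domain $[0,-1]$ empty, so nothing to prove). Squaring both sides, the required inequality becomes
\[
2q - px + \frac{(1+x)^2}{4} \;\le\; p^2 - p(1+x) + \frac{(1+x)^2}{4},
\]
and after cancellation this simplifies exactly to $2q \le p(p-1)$, which is the standing hypothesis.

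Thus $f'(x)\le 0$ on the interior of $[0,p-1]$, and since $f$ is continuous on the closed interval it is decreasing there. There is no real obstacle in this argument; the only mild subtlety is making sure the squaring step is legitimate, which is guaranteed by the bound $x\le p-1$. I would present the proof as a short three-line computation after establishing that the right-hand side $p-\tfrac{1+x}{2}$ is nonnegative.
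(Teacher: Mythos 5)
The paper does not prove this lemma --- it is quoted from \cite{HSF} and \cite{N} --- so there is no internal proof to compare against; your direct-differentiation argument is correct and is essentially the standard one from those sources: the sign of $f'$ reduces, after the legitimate squaring step you justify, exactly to the hypothesis $2q \le p(p-1)$. One small inaccuracy: the nonnegativity of the radicand $2q - px + \tfrac{(1+x)^2}{4}$ on all of $[0,p-1]$ does \emph{not} follow from the stated hypotheses (e.g.\ $p=10$, $q=0$, $x=5$ gives $-41$); the lemma implicitly restricts to the range where the radicand is nonnegative (which holds in the application, since there $2q=2e(G)\ge n\delta$), and your inequality $\sqrt{g(x)}\le p-\tfrac{1+x}{2}$ is valid wherever $g(x)\ge 0$, so the argument stands once that caveat is stated rather than claimed.
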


\section{Proof of Theorem \ref{T1}}

In this section, we provide the proof of Theorem \ref {T1}, which devises a spectral radius condition to ascertain a graph is a fractional $(a,b,m)$-covered graph. 

\begin{proof}[Proof of Theorem~\ref{T1}]  %

By contradiction, suppose that $G$ is not fractional $(a,b,m)$-covered, where $2\le a\le b$ and $1\le m\le b$. By Lemma \ref{le:2}, there exists $S \subseteq V(G)$ and $H\subseteq G$ with $|E(H)|=m$ such that
$ \sum_{v \in T} d_{G-S}(v)-a|T|+b|S| \le \Delta(S,T,H)-1 $, where $T = \{x \in V(G) \setminus S \mid d_{G-S}(x) \leq a-1\}$, $\Delta(S,T,H)=\sum_{x \in S} d_H(x) - e_H(T, S) + \Theta(S, T)$ and $$\Theta(S, T) = \sum_{\substack{1 \leq d_{G \setminus E(H) - S}(x) - a + d_H(x) \leq e_H(x, S) - 1, \\ e_H(x, S) \geq 2}} \bigl\{ d_{G \setminus E(H) - S}(x) - a + d_H(x) \bigr\}.
$$ Let $t=|T|$ and $s=|S|$, by Fact 1, we have 

\begin{equation}\label{eq:1}
	\sum_{v \in T} d_{G-S}(v)-at+bs \le \Delta(S,T,H)-1\le 2m-1. 
\end{equation}

Next, we will prove three claims.

{\bf Claim 1.} $s\ge m+1$.

{\bf Proof.} We first prove that $T\neq \varnothing$. Otherwise, $T= \varnothing$. Then we have $e_H(T, S)=0$ and $\Theta(S,T)=0$. Note that $m\le b$, we have $0=\sum_{v \in T} d_{G-S}(v)\le 2m-1-bs+a \cdot 0$. Then we obtain $bs\le 2m-1\le 2b-1$. Hence $s\le 1$.

 If $s=0$, we have $\Delta(S,T,H)=\sum_{x \in S} d_H(x) - e_H(T, S) + \Theta(S, T)=0.$ According to (\ref{eq:1}), we have $0=\sum_{v \in T} d_{G-S}(v)-at+bs \le \Delta(S,T,H)-1=-1$, which is a contradiction.
 
 If $s=1$, then $\sum_{v \in T} d_{G-S}(v)-at+bs=b.$ Let $S=\{v_0\}$, we obtain $$\Delta(S,T,H)=\sum_{x \in S} d_H(x) - e_H(T, S) + \Theta(S, T)=d_H(v_0)\le m .$$ However, by (\ref{eq:1}), we have $$\Delta(S,T,H)-1\ge \sum_{v \in T} d_{G-S}(v)-at+bs=b\ge m.$$ Hence $\Delta(S,T,H)\ge m+1$, which is a contradiction. Hence $T\neq \varnothing$. 
 
 We proceed to prove $s\ge m+1$. For any $u\in T$, we have $d_G(u)=d_{G-S}(u)+e(u,S).$ Since $\delta(G)\ge a+m$ and $d_{G-S}(u)\le a-1$ for any $u\in T$, we obtain
 \begin{align*}
 	e(u,S)&=d_G(u)-d_{G-S}(u)\\
 	&\ge \delta(G)-d_{G-S}(u)\\
 	&\ge (a+m)-(a-1)\\
 	&=m+1.
 \end{align*}
Hence $s\ge m+1$. $\quad\Box$

{\bf Claim 2.} $t\ge b+1.$

{\bf Proof.} Recall that $1\le m\le b$, $s\ge m+1$, $\delta(G) \geq a+m$ and $d_{G-S}(v) \geq \delta(G) - s \geq a+m - s$ for each $v \in T$. According to (\ref{eq:1}), we have
$$
(a+m-s)t \leq \sum_{v \in T} d_{G-S}(v) \leq 2m-1+at-bs.
$$
Therefore $t \geq \frac{bs-2m}{s-m} + \frac{1}{s-m}\geq \frac{bs-bm}{s-m} + \frac{1}{s-m}= b + \frac{1}{s-m}$. Since $t$ is a positive integer and $s \geq m+1$ due to Claim 1, then $t \geq b + 1$.$\quad\Box$

{\bf Claim 3.} $s\le t+m-1$.

{\bf Proof.}  Otherwise, $s \geq t+m$. By \eqref{eq:1} and $2\le a\le b$,
$$
0 \leq \sum_{v \in T} d_{G-S}(v) \leq 2m-1+at- bs \leq 2m-1+at- b(t+m)=(2-b)m+(a-b)t-1\le -1,
$$
a contradiction. Thus, $s \leq t+m-1$.$\quad\Box$

By (\ref{eq:1}), we get
\begin{align*}
	e(G) &=e(G-S-T, T)+e(T) +e(S, T) + e(G - T)\\
	&\leq \sum_{v \in T} d_{G-S}(v) + e(S, T) + e(G - T) \\
	&\leq 2m-1+at- bs + st + \binom{n - t}{2} \\
	&= 2m-1+at- bs + st + \frac{(n - t)(n - t - 1)}{2}.
\end{align*}

By integrating the condition $\delta(G) \geq a+m$ with Lemmas \ref{le:4} and \ref{le:5}, we obtain
{\small\begin{align*}
	\rho(G) &\leq \frac{\delta(G) - 1}{2} + \sqrt{2e(G) - n\delta(G) + \frac{(\delta(G) + 1)^2}{4}} \\
	&\leq \frac{a+m - 1}{2} + \sqrt{2e(G) - n(a+m) + \frac{(a+m + 1)^2}{4}} \\
	&\leq \frac{a+m-1}{2} + \sqrt{2\left(2m-1+at- bs + st + \frac{(n - t)(n - t - 1)}{2}\right) - n(a+m) + \frac{(a+m+ 1)^2}{4}} \\
	&= \frac{a+m- 1}{2}+\sqrt{\left(n - b - \frac{a+m+ 1}{2}\right)^2-f(t)},
\end{align*}
}
where $f(t)=-t^2 + (2n- 2a -2s- 1)t+ b^2+ab+(b-4)m+2bs-2bn+b+2$. Recall Claims 2 and 3, we obtain $t\ge b+1$ and $s\le t+m-1$. Since  $\frac{\partial f}{\partial s}=-2t+2b<0$, we obtain
\begin{align*}
	f(t) &=-t^2 + (2n- 2a -2s- 1)t+ b^2+ab+(b-4)m+2bs-2bn+b+2 \\
	&\geq-t^2 + (2n- 2a -2(t+m-1)- 1)t+ b^2+ab+(b-4)m+2b(t+m-1)-2bn+b+2 \\
	&= - 3 t^2+ (2n+ 2 b-2a - 2m + 1) t+ b^2+ab+(3b-4)m-2bn-b+2.
\end{align*}
 
Let $g(t)= - 3 t^2+ (2n+ 2 b-2a - 2m + 1) t+ b^2+ab+(3b-4)m-2bn-b+2.$ Since  $n\ge\frac{1}{2}[4b+2a+ab+(b+8)m+16]$, then the symmetry axis of $g(t)$ is $\bar{t}=\frac{2n+2b-2a-2m+1}{6}>b+1.$ Note that $t\ge b+1$, we distinguish the following two cases based on the value of $t$.

{\bf Case 1.} $b+1\le t\le\frac{n}{2}.$

Since $n\ge\frac{1}{2}[4b+2a+ab+(b+8)m+16]$, we have
\begin{align*}
	g(b+1) &=2n-4b-2a-ab-(6-b)m \\
	&\geq 2(\frac{1}{2}[4b+2a+ab+(b+8)m+16])-4b-2a-ab-(6-b)m\\
	&\ge 2(b+1)m+16\\
	&>0.
\end{align*}
and 
\begin{align*}
	g(\frac{n}{2}) &= \frac{n^2}{4}-(b+a+m-\frac{1}{2})n+ b^2+ab+(3b-4)m -b+2 \\
	&\geq a^2(\frac{b^2}{16}-\frac{b}{4}-\frac{3}{4})+a(\frac{b^2m}{8}+\frac{bm}{4}+\frac{5b}{4}-3m-\frac{7}{2})+6m+\frac{13bm}{4}+\frac{bm^2}{2}+\frac{b^2m^2}{16}+22\\
	&\ge a^2(\frac{b^2}{16}-\frac{b}{4}-\frac{3}{4})+a(\frac{b^2m}{8}+\frac{bm}{4}+\frac{5b}{4}-3m-\frac{7}{2})+6m+22.
\end{align*}

Suppose $y(a,b)=a^2(\frac{b^2}{16}-\frac{b}{4}-\frac{3}{4})+a(\frac{b^2m}{8}+\frac{bm}{4}+\frac{5b}{4}-3m-\frac{7}{2})+6m+22$. To show that $g(\frac{n}{2})>0$, we will prove $y(a,b)>0$.

Since  $2\le a\le b$, we obtin $\frac{\partial y}{\partial b}=a^2(\frac{b}{8}-\frac{1}{4})+a(\frac{bm}{4}+\frac{m}{4}+\frac{5}{4})>0$.  Then
$y(a,b)\ge y(a,2)=y(2,2)=2m+16>0\quad\text{(if $b=2$, then $a=2$).}$
Thus, we have $g(\frac{n}{2})\ge y(a,b)>0.$ Based on the above procedure, we have $g(t)\ge \min\{g(b+1),g(\frac{n}{2})\}>0$ for $b+1\le t\le\frac{n}{2}$. Hence $f(t)\ge g(t)>0$ for $b+1\le t\le\frac{n}{2}$. 

{\bf Case 2.} $t\ge \frac{n+1}{2}.$

 Since $n\ge s+t$,  we obtain $s\le n-t$ and
\begin{align*}
	f(t) & =-t^2 + (2n- 2a -2s- 1)t+ b^2+ab+(b-4)m+2bs-2bn+b+2 \\
	&\geq t^2 - ( 2b+2a + 1)t +b^2+ ab+(b-4)m+b+2 
	\quad \text{(since $s \leq n - t$ and $t \geq b + 1$)} \\
	&\geq \frac{n^2}{4} - (b+a)n + b^2+ ab+(b-4)m-a+ \frac{7}{4}
	\quad \text{(since $t \geq \frac{n+1}{2}$ and $n\ge\frac{1}{2}[4b+2a+ab$} \\
	& +(b+8)m+16]\\
	&\geq a^2(\frac{b^2}{16}-\frac{b}{4}-\frac{3}{4})+a(\frac{b^2m}{8}+\frac{3bm}{4}+b-2m-5)+12m+3bm+(4+b)m^2+\frac{b^2m^2}{16}+ \frac{71}{4}\\
	&\ge a^2(\frac{b^2}{16}-\frac{b}{4}-\frac{3}{4})+a(\frac{b^2m}{8}+\frac{3bm}{4}+b-2m-5)+12m+\frac{71}{4}.
\end{align*}
Let $q(a,b)=a^2(\frac{b^2}{16}-\frac{b}{4}-\frac{3}{4})+a(\frac{b^2m}{8}+\frac{3bm}{4}+b-2m-5)+12m+\frac{71}{4}$. Since $2\le a\le b$, we obtin  $\frac{\partial q}{\partial b}=a^2(\frac{b}{8}-\frac{1}{4})+a(\frac{bm}{4}+\frac{3m}{4}+1)>0.$ Therefore,  
$$q(a,b)\ge q(a,2)=q(2,2)=12m+\frac{31}{4}>0\quad\text{(if $b=2$, then $a=2$)}.$$
Then $f(t)\ge q(a,b)>0$ when $t\ge\frac{n+1}{2}$. 

Hence $f(t)>0$ for $t\ge b+1$, which implies that $$\rho(G) \leq  \frac{a+m - 1}{2} +\sqrt{\left(n - b - \frac{a+m+ 1}{2}\right)^2 - f(t)}<n-b-1.$$ This contradicts the statement that $\rho(G) \ge  n-b-1.$

This completes the proof.
\end{proof}

\section{Proof of Theorem \ref{T2}}

In this section, we provide the proof of Theorem \ref {T2}, which devises a size condition to ascertain a graph is a fractional $(a,b,m)$-covered graph. 

\begin{proof}[Proof of Theorem~\ref{T2}]  %
	
	By contradiction, suppose that $G$ is not fractional $(a,b,m)$-covered, where $2\le a\le b$ and $1\le m\le b$. By Lemma \ref{le:2}, there exists $S \subseteq V(G)$ and $H\subseteq G$ with $|E(H)|=m$  such that
	$ \sum_{v \in T} d_{G-S}(v)-a|T|+b|S| \le \Delta(S,T,H)-1 $, where $T = \{x \in V(G)\setminus S \mid d_{G-S}(x) \leq a-1\}$, $\Delta(S,T,H)=\sum_{x \in S} d_H(x) - e_H(T, S) + \Theta(S, T)$ and $$\Theta(S, T) = \sum_{\substack{1 \leq d_{G \setminus E(H) - S}(x) - a + d_H(x) \leq e_H(x, S) - 1, \\ e_H(x, S) \geq 2}} \bigl\{ d_{G \setminus E(H) - S}(x) - a + d_H(x) \bigr\}.	$$ 
	
	Let $t=|T|$ and $s=|S|$, by Fact 1, we have 
	
	\begin{equation}\label{eq:2}
		\sum_{v \in T} d_{G-S}(v)-at+bs \le \Delta(S,T,H)-1\le 2m-1. 
	\end{equation}
	
	Based on the proof of Theorem \ref{T1}, we can obtain $s\ge m+1$ and $t\ge b+1$.
	
	By (\ref{eq:2}), we obtain
	\begin{equation}\label{eq:3}
		\begin{aligned}
			e(G) &=e(G-S-T, T)+e(T) +e(S, T) + e(G - T)\\
			&\leq \sum_{v \in T} d_{G-S}(v) + e(S, T) + e(G - T) \\
			&\leq 2m-1+at- bs  + st + \binom{n - t}{2} \\
			&= 2m-1+at- bs  + st + \frac{(n - t)(n - t - 1)}{2}\\
			&=\binom{n-b-1}{2}+ab+2a+(b+1)m-f(t),
		\end{aligned}
	\end{equation}
where $f(t) = -\frac{t^2}{2} + \left(n - a-s - \frac{1}{2}\right)t + \frac{b^2}{2}+ab+ \frac{3}{2}b+(b-1)m-(b+1)n+bs+2a+2.$
	
	Next, we will prove that $f(t)>0$ based on the value of $t$.
	
	{\bf Case 1.} $b+1\le t\le \frac{n}{2}.$
	
	{\bf Case 1.1.} $a=b=k$.

Then $f(t) = -\frac{t^2}{2} + \left(n - k-s - \frac{1}{2}\right)t + \frac{3k^2}{2}+ \frac{7}{2}k+(k-1)m-(k+1)n+ks+2.$ Then we prove that $t\ge s-m+1$. Otherwise, we obtain $t\le s-m.$ Since $k\ge 2$, by (\ref{eq:2}), 
	$$	0 \leq \sum_{v \in T} d_{G-S}(v) \leq 2m-1+kt-ks  \leq 2m-1+k(s-m) - ks =(2-k)m -1\le -1,	$$
	a contradiction. Then $s \leq t + m - 1$. Since $t\ge k+1$ and $\frac{\partial f}{\partial s}=-t+k<0$, we have  
{\small	\begin{align*}
		f(t) &= -\frac{t^2}{2} + \left(n - k-s - \frac{1}{2}\right)t + \frac{3k^2}{2}+ \frac{7}{2}k+(k-1)m-(k+1)n+ks+2 \\
		&\geq -\frac{t^2}{2} + \left(n - k-(t + m - 1) - \frac{1}{2}\right)t + \frac{3k^2}{2}+ \frac{7}{2}k+(k-1)m-(k+1)n+k(t + m - 1)+2 \\
		&= -\frac{3}{2}t^2 + \left(n - m+ \frac{1}{2}\right)t + \frac{3}{2}k^2+ \frac{5}{2}k+(2k-1)m-(k+1)n + 2.
	\end{align*}}
	
	Let $q_1(t)=-\frac{3}{2}t^2 + \left(n - m+ \frac{1}{2}\right)t + \frac{3}{2}k^2+ \frac{5}{2}k+(2k-1)m-(k+1)n + 2.$ Since $n\ge \frac{13k}{2}+4m+7$, $k\ge2$, by a simple calculation, we have
	$$q_1(k+1)=(k-2)m+1>0$$ and
	\begin{align*}
		q_1\left(\frac{n}{2}\right) &= \frac{n^2}{8} - \left( \frac{m}{2} +k+ \frac{3}{4}\right)n  + \frac{3}{2}k^2 +\frac{5}{2}k+(2k-1)m + 2 \\
		& \ge \frac{9k^2}{32}-\frac{m}{2}+\frac{5km}{4}+2k+\frac{23}{8} \quad \text{(since $n\ge \frac{13k}{2}+4m+7$)}\\
		& > \frac{9k^2}{32}+\frac{23}{8}\\
		& >0.
	\end{align*}
Hence, if $k + 1 \leq t \leq \frac{n}{2}$, we have
	$$
	q_1(t) \geq \min\left\{q_1(k + 1), q_1\left(\frac{n}{2}\right)\right\} > 0.
	$$
	It follows that $f(t) \geq q_1(t) > 0$ for $k + 1 \leq t \leq \frac{n}{2}$.

	{\bf Case 1.2.} $a<b$.
	
	{\bf Subcase 1.2.1.} $s > a - \frac{1}{b-a}+m$.
	
	Then we obtain $t \geq s + b-a -m+1$. Otherwise, $t \leq s + b-a-m$. By  (\ref{eq:2}), $2\le a<b$ and $s > a - \frac{1}{b-a}+m$, 
	$$
	0 \leq \sum_{v \in T} d_{G-S}(v) \leq 2m-1+at- bs  <2m-1+(b-a)(a-s)-am<(2-b)m<0,
	$$
	a contradiction. Therefore $s \leq t -(b-a) +m - 1$. Since $t\ge b+1$ and $\frac{\partial f}{\partial s}=-t+b<0,$ we have
	\begin{align*}
		f(t)& = -\frac{t^2}{2} + \left(n - a-s - \frac{1}{2}\right)t + \frac{b^2}{2}+ab+ \frac{3}{2}b+(b-1)m-(b+1)n+bs+2a+2\\
		 &\geq -\frac{3t^2}{2} + \left(n + 2(b-a)-m + \frac{1}{2}\right)t- \frac{b^2}{2}+2ab+(2b-1)m-(b+1)n +\frac{b}{2}  + 2a + 2.
	\end{align*}
	Let  
	$$
	q_2(t)=-\frac{3t^2}{2} + \left(n + 2(b-a)-m + \frac{1}{2}\right)t- \frac{b^2}{2}+2ab+(2b-1)m-(b+1)n +\frac{b}{2}  + 2a + 2.
	$$
	Since $2\le a< b$ and $n\ge 4a+\frac{5b}{2}+4m+7$, we have
	$$
	q_2(b + 1) = (b-2)m+1 > 0
	$$
	and
	\begin{align*}
		q_2\left(\frac{n}{2}\right) &= \frac{n^2}{8} - \left(a + \frac{m}{2}+\frac{3}{4}\right)n - \frac{b^2}{2}+ 2ab+\frac{b}{2}  +(2b-1)m + 2a + 2 \\
		&\geq  \frac{9b^2}{32}+(\frac{13}{4}b-2a-\frac{1}{2})m+2a(b-a)+3b-a + \frac{23}{8} \quad \text{(since $n\ge 4a+\frac{5b}{2}+4m+7$)}\\
		&>  \frac{9b^2}{32} + \frac{23}{8}\\
		&>0
	\end{align*}
Therefore,
	$$
	q_2(t) \geq \min\left\{q_2(b + 1), q_2\left(\frac{n}{2}\right)\right\} > 0.
	$$	for $b + 1 \leq t \leq \frac{n}{2}$. It follows that $f(t) \geq q_2(t) > 0$ for $b + 1 \leq t \leq \frac{n}{2}$.
	
	{\bf Subcase 1.2.2.} $s \le a - \frac{1}{b-a}+m$.
	
	Since $s \leq a - \frac{1}{b-a}+m$, we obtain
{\small	\begin{align*}
		f(t)& = -\frac{t^2}{2} + \left(n - a-s - \frac{1}{2}\right)t + \frac{b^2}{2}+ab+ \frac{3}{2}b+(b-1)m-(b+1)n+bs+2a+2\\
		 &\geq -\frac{t^2}{2} + \left(n+\frac{1}{b-a} - 2a-m - \frac{1}{2}\right)t 
		+ \frac{b^2}{2}+2ab+ \frac{3}{2}b -\frac{b}{b-a}+(2b-1)m-(b+1)n  + 2a + 2.
	\end{align*}}
   Let 
	$$
	q_3(t) = -\frac{t^2}{2} + \left(n+\frac{1}{b-a} - 2a-m - \frac{1}{2}\right)t 
	+ \frac{b^2}{2}+2ab+ \frac{3}{2}b -\frac{b}{b-a}+(2b-1)m-(b+1)n  + 2a + 2.
	$$
Since $2\le a< b$ and $n\ge 4a+\frac{5b}{2}+4m+7$, we have
	$$
	q_3(b + 1) =  \frac{1}{b-a}+(b-2)m+1 > 0
	$$
	and
	\begin{align*}
		q_3\left(\frac{n}{2}\right) &= \frac{3n^2}{8} - \left(a + b+\frac{1}{2}m+ \frac{5}{4} - \frac{1}{2(b-a)} \right)n + \frac{b^2}{2}+ 2ab + \frac{3}{2}b+(2b-1)m- \frac{b}{b-a}+ 2a + 2  \\
		&\geq \frac{11b^2}{32}+\frac{8m+b+8a+14}{4(b-a)}+4m^2+(6a+\frac{17}{4}b+\frac{23}{2})m+2a^2+3ab+\frac{9}{2}b+11a+\frac{93}{8}\\
		&\geq \frac{11b^2}{32}+\frac{93}{8}\\
		&> 0
	\end{align*}
Hence, for $b + 1 \leq t \leq \frac{n}{2}$, we get
	$$
	q_3(t) \geq \min\left\{q_3(b + 1), q_3\left(\frac{n}{2}\right)\right\} > 0.
	$$
It follows that  $f(t) >q_3(t)>0$ in this case. Hence $f(t)>0$ for $b + 1 \leq t \leq \frac{n}{2}$.

 Combining this with (\ref*{eq:3}), we obtain $e(G) < \binom{n - b - 1}{2} + ab + 2a+(b+1)m$, a contradiction.
	
	{\bf Case 2.} $t\ge \frac{n+1}{2}.$
	
Note that $n \geq s + t$,  then $s \leq n - t$. For $t \geq \frac{n+1}{2}$, we obtain
	\begin{align*}
		f(t) & = -\frac{t^2}{2} + \left(n - a-s - \frac{1}{2}\right)t + \frac{b^2}{2}+ab+ \frac{3}{2}b+(b-1)m-(b+1)n+bs+2a+2\\
		&\geq \frac{t^2}{2} - \left(a +b + \frac{1}{2}\right)t+ \frac{b^2}{2}+ ab+ \frac{3b}{2} +(b-1)m-n + 2a + 2 \quad \text{(since $s \leq n - t$)} \\
		&\geq \frac{n^2}{8} -(\frac{a+b}{2}+1)n+\frac{b^2}{2}+ ab+ \frac{3a}{2}+(b-1)m+b  + \frac{15}{8}  \quad \text{(since $t \geq \frac{n+1}{2}$)} \\
		&\geq \frac{b^2}{32} +\frac{b}{4}(a-\frac{5}{2})+2(a+1)m+\frac{3bm}{2}+2m^2+a+1 \quad \text{(since $n\ge 4a+\frac{5b}{2}+4m+7$)}\\
		&\ge\frac{b^2}{32} +\frac{b}{4}(a-\frac{5}{2})+a+1 \quad \text{(since  $2\le a\le b$ and $k\ge0$)} 
	\end{align*}
	
	Let $w(a,b)=\frac{b^2}{32} +\frac{b}{4}(a-\frac{5}{2})+a+1$. Then $\frac{\partial w}{\partial b}=\frac{1}{16}b+\frac{1}{4}(a-\frac{5}{2})\ge\frac{1}{16}b-\frac{1}{8}>0$ due to $2\le a\le b$. Hence $w(a,b)\ge w(a,2)=w(2,2)=\frac{23}{8}>0$, we have $f(t)>0$ for $t\ge \frac{n+1}{2}$. By (\ref{eq:3}), we get $e(G) < \binom{n-b-1}{2} + ab + 2a+(b+1)m$, which also leads to a contradiction.
	
	This completes the proof.
\end{proof}

\section*{Declarations}

The authors declare that they have no conflict of interest.

\section*{Data availability}

No data was used for the research described in the paper.












\begin{thebibliography}{99}

\bibitem{BA} R.B. Bapat, Graphs and Matrices, Springer, London, 2014.

\bibitem{BM} J.A. Bondy, U.S.R. Murty, Graph Theory with Applications, Macmillan, London, 1976.

\bibitem{FLA}	A. Fan, R. Liu, G. Ao, Spectral radius, fractional $[a, b]$-factor and $ID$-factor-critical graphs, Discrete Math. 347 (2024) 113976.

\bibitem{FLLO} D. Fan, H. Lin, H. Lu, S. O, Eigenvalues and factors: a survey. arXiv preprint
arXiv:2312.15902, 2023.

\bibitem{GW} W. Gao, W. Wang, On fractional $(g,f,n^{\prime},m)$-critical covered graphs, J. Oper. Res. Soc. China 12 (2024) 446-460.

\bibitem{GWC} W. Gao, W. Wang, Y. Chen, Isolated toughness conditions for fractional $(a,b,m)$-covered graphs, Acta Math. Sin. (Chin. Ser.) https://link.cnki.net/urlid/11.2038.O1.20250403.1558.015.

\bibitem{HSF} Y. Hong, J. Shu, K. Fang, A sharp upper bound of the spectral radius of graphs, J. Combin. Theory, Ser. B 81 (2001) 177–183.

\bibitem{HLZZ} Y. Hu, H. Lin, Y. Zhang, Z. Zhang, Distance spectral radius and fractional matching in $t$-connected graphs, Linear Multilinear Algebra 72(2024) 3128–3141.

\bibitem{LMZ} S. Li, S. Miao, M. Zhang, On the size, spectral radius, distance spectral radius and fractional matchings in graphs, Bull. Aust. Math. Soc. 108 (2023) 187–199.

\bibitem{LFZ} Y. Li, D. Fan, Y. Zhu, Spectral radius and fractional $[a, b]$-factor of graphs, Linear Algebra Appl. 715 (2025) 32–45.

\bibitem{LYZ} Z. Li, G. Yan, X. Zhang, On fractional $(g, f )$-covered graphs, OR Trans. (China) 6 (2002) 65–68.

\bibitem{LLGX} R. Liu, H. Lai, L. Guo, J. Xue, Fractional matching number and spectral radius of nonnegative matrices of graphs, Linear Multilinear Algebra 70(2022) 4133–4145.

\bibitem{L} S. Liu, On fractional $(g, f , m)$-covered graphs. 2011 International Conference on Computers, Communications, Control and Automation (CCCA), pp. 246–248 (2011).

\bibitem{LLA} J. Lou, R. Liu, G. Ao, Fractional matching, factors and spectral radius in graphs
involving minimum degree, Linear Algebra Appl. 677 (2023) 337-351.

\bibitem{MW} T. Ma, L. Wang, Distance spectral conditions for $I D$-factor-criticality and
fractional $[a, b]$-factor of graphs, Discrete Math. 349 (2026) 114803.

\bibitem{N} V. Nikiforov, Some inequalities for the largest eigenvalue of a graph, Combin. Probab. Comput. 11 (2)
(2002) 179–189.

\bibitem{O} S. O, Spectral radius and fractional matchings in graphs, European J. Combin. 55 (2016)
144–148.

\bibitem{O2} S. O, Eigenvalues and $[a, b]$-factors in regular graphs, J. Graph Theory 100 (3) (2022) 458–469.

\bibitem{WZC} J. Wang, J. Zhang, Y. Chen, Spectral radius conditions for fractional $[a, b]$-covered graphs, Linear Algebra Appl. 666 (2023) 1-10.

\bibitem{WZ} J. Wei, S. Zhang, Proof of aconjecture on the spectral radius condition for $[a, b]$-factors, Discrete Math. 346 (2023) 113269.

\bibitem{YK} J. Yang, W. Kang, Fractional $(g, f)$-factor covered graph and deleted graph, Proceedings of the 6th
Academic Exchange Conference of China Operations Research Society, (2000) pp. 450–454.

\bibitem{YH1} Y. Yuan, R. Hao, A degree condition for fractional $[a, b]$-covered graphs, Inf. Process. Lett. 143 (2019) 20–23.

\bibitem{YH2} Y. Yuan, R. Hao, Neighborhood union conditions for fractional $[a, b]$-covered graphs, Bull. Malays. Math. Sci. Soc. 43 (2020) 157–167.

\bibitem{Z} S. Zhou, Degree conditions for graphs to be fractional $k$-covered graphs, Ars Combin. 118 (2015), 135–142.


\end{thebibliography}
\end{document}